\newtheorem{theorem}{Theorem}[section]
\newtheorem{lemma}[theorem]{Lemma}
\theoremstyle{definition}
\newtheorem{remark}[theorem]{Remark}
\numberwithin{equation}{section}
\newcommand{\be}{\begin{equation}}
\newcommand{\ee}{\end{equation}}
\DeclareMathOperator{\spt}{spt}
\def\XXint#1#2#3{{\setbox0=\hbox{$#1{#2#3}{\int}$}
 \vcenter{\hbox{$#2#3$}}\kern-.5\wd0}}
\title{A necessary condition on domains for optimal Orlicz-Sobolev embeddings}
\author{Nijjwal Karak}
\address{Department of Mathematical Analysis, Charles University, Sokolovsk\'a 83, 18600 Prague 8, Czech Republic}
\email{nijjwal@gmail.com}
\thanks{This work was supported by OP RDE project no. CZ.02.2.69/0.0/0.0/16\_027/0008495, International Mobility of Researcher at Charles University.}
\begin{document}
\begin{abstract}
We provide a necessary condition on the regularity of domains for the optimal embeddings of first order (and higher order) Orlicz-Sobolev spaces into Orlicz spaces in the sense of \cite{Cia96} (and \cite{Cia06}).
\end{abstract}
\maketitle
\indent Keywords: Orlicz-Sobolev space, Orlicz-Sobolev embedding, Measure density condition.\\
\indent 2010 Mathematics Subject Classification: 46E35, 46E30.
\section{Introduction}
This article concerns with a necessary condition for optimal Orlicz-Sobolev embeddings, when the functions do not necessarily vanish on the boundary, in terms of the regularity of the domain. The first work in this direction was the result of Haj\l asz-Koskela-Tuominen \cite{HKT08b} for optimal Sobolev embeddings, in all possible cases. Later, there have been more results for other Sobolev-type embeddings, namely for fractional Sobolev spaces, variable exponent Sobolev spaces, Besov and Triebel-Lizorkin spaces in $\mathbb{R}^n$ and in metric measure spaces, see \cite{Gor17, GKP, HIT16, Kar2, Kar1, Zho15}. This seems to be the first attempt for Orlicz-Sobolev spaces and the regularity of the domain which we concern is so-called the measure density condition. A subset $\Omega$ of $\mathbb{R}^n,$ $n\geq 2,$ is said to satisfy the measure density condition if there exists a positive constant $c$ such that, for all $x\in\Omega$ and all $0<r\leq 1,$
\begin{equation}
\vert B(x,r)\cap\Omega\vert\geq cr^n.
\end{equation}
We use the notation $\vert S\vert$ for the Lebesgue measure of a set $S.$ Note that sets satisfying measure density condition are sometimes called in the literature regular, Ahlfors $n$-regular or $n$-sets, \cite{JW84, Shv07}. Also note that sets satisfying such a condition have zero boundary measure, \cite[Lemma 2.1]{Shv07}. Some examples of sets satisfying the measure density condition are Cantor-like sets such as
Sierpi\'nski carpets (or gaskets) of positive measure.\\

\noindent In \cite{Cia96, Cia06}, the author has established, for a Young function $A,$ optimal embedding for $W^{m,A}(\Omega)$ with $\Omega\in\mathcal{G}(1/n')$ for first order Orlicz-Sobolev spaces and with $\Omega$ having Lipschitz boundary for higher-order Orlicz-Sobolev spaces. Here and in what follows $n\geq 2,$ $n'=\frac{n}{n-1}$ and
\begin{multline*}
\mathcal{G}(1/n')=\{G\in\mathbb{R}^n:G\,\,\text{is open and}\,\, N>0, Q>0\,\,\text{exist such that}\\
\vert E\vert^{\frac{1}{n'}}\leq QP(E;G) \,\,\text{for all}\,\, E \,\,\text{such that}\,\, \vert E\vert\leq N\},
\end{multline*}
$P(E;G)$ denotes the perimeter of $E$ relative to $G,$ \cite{Maz85}. A bounded domain $\Omega$ is called a Lipschitz domain if each point $x\in\partial\Omega$ has a neighborhood $U$ such that $U\cap\Omega$ is represented by the inequality $x_n<f(x_1,\ldots,x_{n-1})$ in some Cartesian coordinate system with function $f$ satisfying a Lipschtiz condition, \cite{Maz85}. It is easy to see that both these domains satisfy measure density condition; see \cite{AF03, Maz85} for more details about regularity of domains and their relations.\\ 

\noindent Let $A$ be any Young function such that $\int_0 \tilde{A}(t)/t^{1+n'}\,dt<\infty$ and let $A_n$ be the Young function defined by
\begin{equation}\label{A_n}
A_n(s)=\int_0^s r^{n'-1}(\Phi_n^{-1}(r^{n'}))^{n'}\,dr,
\end{equation}
where $\Phi_n^{-1}$ is the generalized left-continuous inverse of
\begin{equation}\label{Phi_n}
\Phi_n(s)=\int_0^s \frac{\tilde{A}(t)}{t^{1+n'}}\,dt.
\end{equation}
It is proved in \cite{Cia96} that if $\Omega\in\mathcal{G}(1/n'),$ then the continuous embedding
\begin{equation}\label{ctsembedding}
W^{1,A}(\Omega)\hookrightarrow L^{\bar{A}_n}(\Omega)
\end{equation}
holds, where $\bar{A}_n$ is the Young function defined by
\begin{equation*}
\bar{A}_n(s)=
\begin{cases}
A_n(s)& \text{if $s\geq s_2$}\\
A(s)& \text{if $0\leq s\leq s_1,$}
\end{cases}
\end{equation*}
for some $0<s_1<s_2.$ Moreover, this embedding is optimal in the sense that $L^{\bar{A}_n}(\Omega)$ is the smallest Orlicz space that renders \eqref{ctsembedding} true. In this article, we have proved that for any open set $\Omega,$ if the above embedding holds, with some additional restriction on $A,$ then $\Omega$ satisfies the measure density condition, see Theorem \ref{main}.\\

\noindent Let $1\leq m<n$ and $A$ be any Young function satisfying 
\begin{equation}\label{finite}
\int_0\left(\frac{s}{A(s)}\right)^{\frac{m}{n-m}}\,ds<\infty.
\end{equation} 
Note that the conditions \eqref{finite} and $\int_0 \tilde{A}(t)/t^{1+n'}\,dt<\infty$ are equivalent, \cite[Lemma 2]{Cia00}.  Let us define, for $s\geq 0,$
\begin{equation}\label{A_nm}
A_{\frac{n}{m}}(s)=A\circ H_{\frac{n}{m}}^{-1}(s),
\end{equation}
where $H_{\frac{n}{m}}^{-1}$ denotes the generalized left-continuous inverse of
\begin{equation*}
H_{\frac{n}{m}}(r)=\left(\int_0^r\left(\frac{t}{A(t)}\right)^{\frac{m}{n-m}}\,dt\right)^{\frac{n-m}{n}}, \quad\text{for}\,\, r\geq 0.
\end{equation*}
Cianchi \cite{Cia06} has proved that if $\Omega$ is a Lipschitz domain then the continuous embedding
\begin{equation}\label{ctsembeddinghigher}
W^{m,A}(\Omega)\hookrightarrow L^{A_{\frac{n}{m}}}(\Omega)
\end{equation}
holds and the embedding is optimal. Analogous to Theorem \ref{main}, we have proved a similar result here in this article, see Theorem \ref{main-higher}.
\section{Preliminaries}
\subsection{Young functions}
A function $A:[0,\infty)\rightarrow [0,\infty]$ is called a Young function if it has the form
\begin{align*}
A(s)=\int_0^s a(r)\, dr,
\end{align*}
where $a:[0,\infty)\rightarrow [0,\infty]$ with $a(0)=0,$ is an increasing, left-continuous function which is neither identically zero nor identically infinite on $(0,\infty).$ Every Young function is non-negative, increasing, left-continuous and convex on $[0,\infty).$ Moreover, $A(s)/s$ is increasing on $(0,\infty)$ and we have
\begin{align*}
\frac{A(s)}{s}\leq a(s)\leq \frac{A(2s)}{s},\quad\text{for}\,\, s>0. 
\end{align*}
Observe that the function $\bar{A}$ defined by $\bar{A}(s)=\int_0^sA(r)/r\,dr$ is also a Young function and satisfies
\begin{equation}\label{compare}
\bar{A}(s)\leq A(s)\leq \bar{A}(2s), \quad\text{for}\,\, s\geq 0.
\end{equation}
The Young conjugate $\tilde{A}$ of $A$ is given by
\begin{equation}
\tilde{A}(s)=\sup\{sr-A(r):r>0\}.
\end{equation} 
Note that $\tilde{\tilde{A}}=A.$ Also for any Young function $A,$ and the right-continuous inverses $A^{-1},$ $\tilde{A}^{-1}$ of $A$ and $\tilde{A}$ respectively, one has, for $0\leq r\leq \infty,$
\begin{equation}\label{relation}
r\leq A^{-1}(r)\tilde{A}^{-1}(r)\leq 2r.
\end{equation}
Given two Young functions $A$ and $B,$ the function $B$ is said to dominate the function $A$ globally [resp. near infinity] if a positive constant $c$ exists such that $A(s)\leq B(cS)$ holds for $s\geq 0$ [resp. for s greater than some positive number]. The functions $A$ and $B$ are called equivalent globally [near infinity] if each dominates the other globally [near infinity]. For more details about Young functions and their properties, see, for example, \cite{PKAO13, RR91}.
\subsection{Orlicz spaces}
Let $\Omega$ be a measurable subset of $\mathbb{R}^n.$ For a Young function $A,$ the Orlicz space $L^A(\Omega)$ is the collection of all measurable functions $f$ on $\Omega$ such that
\begin{equation}
\int_{\Omega}A\left(\frac{\vert f(x)\vert}{\lambda}\right)\,dx<\infty
\end{equation} 
for some $\lambda>0.$ The Orlicz space $L^A(\Omega)$ is a Banach space endowed with the Luxemburg norm
\begin{equation*}
\Vert f\Vert_{L^A(\Omega)}=\inf\left\{\lambda>0:\int_{\Omega}A\left(\frac{\vert f(x)\vert}{\lambda}\right)\,dx\leq 1\right\}
\end{equation*}
for a measurable function $f$ on $\Omega.$ Notice that for a measurable set $E$ in $\Omega$ with positive measure, we have
\begin{equation}\label{chi_norm}
\Vert \chi_E\Vert_{L^A(\Omega)}=\frac{1}{A^{-1}(\frac{1}{\vert E\cap \Omega\vert})},
\end{equation}
where $\chi_E$ denotes the characteristic function of $E.$ We refer to \cite{AF03, PKAO13, RR91} for more details about Orlicz spaces.
\subsection{Orlicz-Sobolev spaces}
Let $\Omega\subset\mathbb{R}^n$ be open and $m$ be a positive integer. The $m$-th order Orlicz-Sobolev space $W^{m,A}(\Omega)$ is defined as
\begin{multline*}
W^{m,A}(\Omega)=\{u\in L^A(\Omega): u \,\,\text{is}\,\,m\text{-times weakly differentiable in}\,\Omega\,\,\text{and}\\
\nabla^{\alpha}u\in L^A(\Omega)\,\,\text{for every}\,\, \alpha \,\,\text{such that}\,\, \vert\alpha\vert\leq m\}.
\end{multline*}
Here $\alpha$ is any multi-index having the form $\alpha=(\alpha_1,\ldots,\alpha_n)$ for non-negative integers $\alpha_1,\ldots,\alpha_n,$ $\vert\alpha\vert=\alpha_1+\cdots+\alpha_n$ and $\nabla^{\alpha}u=\frac{\partial^{\vert\alpha\vert} u}{\partial x_1^{\alpha_1}\cdots\partial x_n^{\alpha_n}}.$ The space $W^{m,A}(G)$ is a Banach space equipped with the norm $\Vert u\Vert_{W^{m,A}(\Omega)}=\sum_{\vert\alpha\vert\leq m}\Vert \nabla^{\alpha}u\Vert_{L^A(\Omega)}.$ More details about this subsection can be found in \cite{AF03, Tuo04}.
\subsection{Boyd index} A local upper Boyd index $I_A$ of a Young function $A$ is defined as
\begin{equation*}
I_A=\inf_{0<t<1}\frac{\log t}{\log h_A(t)},
\end{equation*}
where the function $h_A:(0,\infty)\rightarrow [0,\infty]$ is given by
\begin{equation*}
h_A(t)=\limsup_{s\rightarrow\infty}\frac{A^{-1}(st)}{A^{-1}(s)},\quad\text{for}\,\, t>0.
\end{equation*}
We refer to \cite{Boy71} for the definitions of other Boyd indices and more details about them. We will need the following lemma regarding equivalency of pointwise growth, integral growth and the local upper Boyd index of a Young function, see \cite{Cia99} or \cite{CM} for the proof and for other equivalent conditions.
\begin{lemma}\label{musil-cianchi}
Let $A$ be a Young function and let $0<\alpha<1.$ Then the following conditions are equivalent.\\
$(i)$ The local upper Boyd index satisfies $I_A<1/\alpha.$\\
$(ii)$ There exists a constant $k>1$ such that
$$\int_1^t\frac{\tilde{A}(s)}{s^{1/(1-\alpha)+1}}\,ds\leq\frac{\tilde{A}(kt)}{t^{1/(1-\alpha)}}\quad\text{near infinty}.$$
$(iii)$ There exist constants $\sigma>1$ and $c\in (0,1)$ such that
$$A(\sigma t)\leq c\sigma^{1/\alpha}A(t)\quad\text{near infinty}.$$
\end{lemma}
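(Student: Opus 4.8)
The plan is to settle the elementary equivalence $(i)\Leftrightarrow(iii)$ by passing to inverse functions, and then to connect $(ii)$ to these via the Young conjugate and relation~\eqref{relation}. For $(i)\Leftrightarrow(iii)$, one uses that $h_A$ is submultiplicative and $\le1$ on $(0,1)$, so unwinding the infimum defining $I_A$ shows that $I_A<1/\alpha$ holds if and only if there is a single $t_0\in(0,1)$ with $h_A(t_0)<t_0^{\alpha}$. Granting this, fix $\theta\in(h_A(t_0),t_0^{\alpha})$; then $A^{-1}(t_0s)\le\theta A^{-1}(s)$ for all large $s$, and substituting $s=A(r)$ (handling the jumps and flat parts of $A$ by the standard sandwich inequalities for generalized inverses) yields $t_0A(r)\le A(\theta r)$, i.e.\ $A(\sigma r)\le t_0^{-1}A(r)$ near infinity with $\sigma:=1/\theta>1$; since $\theta<t_0^{\alpha}$, one has $t_0^{-1}=c\,\sigma^{1/\alpha}$ with $c:=\theta^{1/\alpha}/t_0<1$, which is $(iii)$. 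Conversely, starting from $(iii)$ and applying $A^{-1}$ to $A(\sigma t)\le c\sigma^{1/\alpha}A(t)$ gives $A^{-1}(\mu x)\ge\sigma A^{-1}(x)$ for large $x$ with $\mu:=c\sigma^{1/\alpha}>1$, hence $h_A(1/\mu)\le1/\sigma<\mu^{-\alpha}=(1/\mu)^{\alpha}$, the strict inequality amounting to $c^{\alpha}<1$; thus $I_A<1/\alpha$.

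For $(i)\Leftrightarrow(ii)$, put $\beta:=1/(1-\alpha)>1$, the Hölder conjugate of $1/\alpha$, and first recognize $(ii)$ as the integral form of the pointwise lower-growth condition $\tilde A(\lambda t)\ge d\,\lambda^{\beta}\tilde A(t)$ near infinity for some $\lambda>1$, $d>1$. Indeed, splitting $\int_1^t\tilde A(s)s^{-1-\beta}\,ds$ over the dyadic blocks $[\lambda^{j},\lambda^{j+1}]$ and using the geometric growth of $\tilde A(\lambda^{j})\lambda^{-j\beta}$ together with the convexity of $\tilde A$ inside each block, the integral is dominated by a convergent geometric series comparable to its last term $\tilde A(\lambda t)t^{-\beta}$; passing from the dilation $\lambda$ to a sufficiently large $k$ then absorbs the fixed multiplicative constant and the bounded contribution near $1$, giving $(ii)$ with constant $1$. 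To tie this to $(i)$, I would invoke \eqref{relation} in the form $\tilde A^{-1}(r)\asymp r/A^{-1}(r)$: it turns the power-type control of $A^{-1}$ encoded in $I_A<1/\alpha$ into an upper power bound of exponent $1-\alpha=1/\beta$ for $\tilde A^{-1}$, equivalently the lower-growth condition on $\tilde A$ above. So that the strict inequalities survive this transfer, one first replaces the $t_0$ coming from $(i)$ by $t_0^{N}$ for $N$ large — legitimate since submultiplicativity of $h_A$ only widens the gap $h_A(t_0^{N})<t_0^{N\alpha}$ — which renders the constant $2$ from \eqref{relation} harmless.

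The step I expect to be the main obstacle is the implication $(ii)\Rightarrow(i)$: upgrading the averaged inequality $\int_1^t\tilde A(s)s^{-1-\beta}\,ds\le\tilde A(kt)t^{-\beta}$ to the genuine pointwise growth $\tilde A(\lambda t)\ge d\,\lambda^{\beta}\tilde A(t)$ with $d>1$. Writing $F(t)=\int_1^t\tilde A(s)s^{-1-\beta}\,ds$, one can bound $F(kt)-F(t)\ge c_0\,\tilde A(t)t^{-\beta}$ from below using that $\tilde A(s)/s$ is nondecreasing, and combine this with $F(kt)\le k^{-\beta}\tilde A(k^{2}t)t^{-\beta}$ coming from the hypothesis; but a single application of this only yields an additive improvement, so one must iterate the recursion and exploit the convexity of $\tilde A$ to exclude long stretches of sub-power-$\beta$ growth, all the while keeping the resulting growth exponent strictly above $\beta$ and the ``near infinity'' qualifiers consistent. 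This quantitative duality between the upper Boyd index of $A$ and the integral growth of $\tilde A$ is precisely what is worked out in \cite{Cia99, CM} (cf.\ the Boyd-index calculus in \cite{Boy71}), and it is the ingredient I would cite rather than reprove in detail.
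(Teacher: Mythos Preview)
The paper does not give a proof of Lemma~\ref{musil-cianchi} at all; it simply refers the reader to \cite{Cia99} and \cite{CM} for the argument and for further equivalent conditions. In that sense there is nothing to compare your attempt against beyond noting that you, too, ultimately defer the delicate implication $(ii)\Rightarrow(i)$ to the very same references.

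That said, your sketch of $(i)\Leftrightarrow(iii)$ is sound: the submultiplicativity of $h_A$ and the computation $I_A<1/\alpha\iff\exists\,t_0\in(0,1)$ with $h_A(t_0)<t_0^{\alpha}$ are correct, and the passage from the inequality on $A^{-1}$ to the pointwise bound on $A$ via $s=A(r)$ works once the generalized-inverse identities $A(A^{-1}(x))\le x\le A^{-1}(A(x))$ (for the right-continuous inverse) are invoked; you flag this, which is appropriate. The observation that $\mu=c\sigma^{1/\alpha}>1$ follows automatically from convexity (since $A(\sigma t)\ge\sigma A(t)$) is worth making explicit, as otherwise the step ``$h_A(1/\mu)\le1/\sigma$'' looks like it might be comparing the wrong sides. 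Your treatment of $(iii)\Rightarrow(ii)$ through the dual lower-growth condition on $\tilde A$ and a dyadic decomposition of the integral is the standard route and is fine at the level of a sketch. Your honest identification of $(ii)\Rightarrow(i)$ as the genuine obstacle is accurate; the self-improvement argument you outline (iterating the recursion for $F(t)=\int_1^t\tilde A(s)s^{-1-\beta}\,ds$) is indeed how this is handled in \cite{Cia99, CM}, and citing those sources rather than reproducing the bootstrap is exactly what the paper itself does.
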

\section{Main results}
\noindent Here is our main Theorem:
\begin{theorem}\label{main}
Let $A$ be a Young function such that $\int_0 \tilde{A}(t)/t^{1+n'}\,dt<\infty$ and $I_A<n.$ 
Let $A_n$ be the Young function defined by \eqref{A_n} and $\Omega$ be any open subset of $\mathbb{R}^n.$ Assume that the continuous embedding $W^{1,A}(\Omega)\hookrightarrow L^{\bar{A}_n}(\Omega)$ holds, where $\bar{A}_n$ is equivalent to $A_n$ near infinity. Then $\Omega$ satisfies the measure density condition.
\end{theorem}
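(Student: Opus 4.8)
The plan is to derive the measure density condition by testing the embedding against sharply localised Lipschitz bumps and then iterating the resulting inequality over dyadic scales. Write $\omega_n=|B(0,1)|$ and, for $x\in\Omega$ and $0<\rho\le1$, set $V_x(\rho)=|B(x,\rho)\cap\Omega|$; since $\Omega$ is open we have $V_x(\rho)=\omega_n\rho^n$ whenever $0<\rho<\dist(x,\partial\Omega)$, while always $0<V_x(\rho)\le\omega_n\rho^n$ (if $\Omega=\mathbb{R}^n$ there is nothing to prove). Let $C<\infty$ be the norm of the embedding. For fixed $x$ and $\rho$ I would test with the standard cutoff $u(y)=\min\{1,\max\{0,2-2|y-x|/\rho\}\}$ restricted to $\Omega$: it is Lipschitz with bounded support, hence $u\in W^{1,A}(\Omega)$; moreover $\chi_{B(x,\rho/2)}\le u\le\chi_{B(x,\rho)}$ on $\Omega$ and $|\nabla u|\le(2/\rho)\chi_{B(x,\rho)}$. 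Using monotonicity and homogeneity of the Luxemburg norm together with \eqref{chi_norm}, the inequality $\|u\|_{L^{\bar A_n}(\Omega)}\le C\|u\|_{W^{1,A}(\Omega)}$ yields, after rearranging and using $\rho\le1$,
\[
\rho\,A^{-1}\!\big(1/V_x(\rho)\big)\ \le\ 3C\,\bar A_n^{-1}\!\big(1/V_x(\rho/2)\big),\qquad x\in\Omega,\ 0<\rho\le1,
\]
which I shall call $(\ast)$.

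The second ingredient is the pointwise comparison of $A$ with its Sobolev conjugate: there exist $c_1>0$ and $t_0\ge1$ such that
\[
A^{-1}(t)\ \ge\ c_1\,t^{1/n}\,\bar A_n^{-1}(t)\qquad\text{for all }t\ge t_0,
\]
call this $(\ast\ast)$; only this one-sided bound will be needed, and it is here that $I_A<n$ enters. Indeed, $A_n$ from \eqref{A_n} is equivalent near infinity to $A\circ H_n^{-1}$ with $H_n(r)=\big(\int_0^r(s/A(s))^{1/(n-1)}\,ds\big)^{1/n'}$, and Lemma \ref{musil-cianchi} applied with $\alpha=1/n$ converts $I_A<n$, via its condition (iii), into $\int_0^r(s/A(s))^{1/(n-1)}\,ds\lesssim r\,(r/A(r))^{1/(n-1)}$ near infinity; hence $H_n(r)\lesssim r/A(r)^{1/n}$ and so $A_n^{-1}(t)=H_n(A^{-1}(t))\lesssim A^{-1}(t)/t^{1/n}$ for large $t$ (with the usual care for generalized inverses), and since $\bar A_n$ is equivalent to $A_n$ near infinity, $(\ast\ast)$ follows. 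I would also use that inverses of Young functions are sublinear: $\bar A_n^{-1}(\lambda t)\le\lambda\,\bar A_n^{-1}(t)$ for $\lambda\ge1$, since $\bar A_n(s)/s$ is increasing.

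The core step is a self-improvement along dyadic scales. Fix $x\in\Omega$, $0<r\le1$, and suppose toward a contradiction that $V_x(r)<\varepsilon_0\,\omega_n r^n$, where $\varepsilon_0\in(0,1)$ is chosen below. Then $V_x(\rho)\le V_x(r)<\varepsilon_0\omega_n$ for all $0<\rho\le r$, so taking $\varepsilon_0\le1/(\omega_n t_0)$ makes $(\ast\ast)$ applicable at the argument $1/V_x(\rho)$ for every such $\rho$. Substituting $(\ast\ast)$ into $(\ast)$, applying sublinearity with $\lambda=V_x(\rho)/V_x(\rho/2)\ge1$, and cancelling the positive factor $\bar A_n^{-1}(1/V_x(\rho))$ yields $V_x(\rho/2)\le(3C/c_1)\,V_x(\rho)^{1+1/n}/\rho$ for $0<\rho\le r$; with $\phi(\rho):=V_x(\rho)/(\omega_n\rho^n)$ this becomes $\phi(\rho/2)\le C_4\,\phi(\rho)^{1+1/n}$ for a constant $C_4$ depending only on $n$, $C$ and $c_1$. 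Iterating from $\rho=r$ gives $\phi(r/2^k)\le C_4^{-n}\big(C_4^{\,n}\phi(r)\big)^{(1+1/n)^k}$ for all $k\ge0$. Fixing $\varepsilon_0=\min\{\tfrac12 C_4^{-n},\,1/(\omega_n t_0)\}$, the assumption $\phi(r)<\varepsilon_0$ forces $C_4^{\,n}\phi(r)<\tfrac12$, hence $\phi(r/2^k)\to0$ as $k\to\infty$, contradicting $\phi(\rho)=1$ for $0<\rho<\dist(x,\partial\Omega)$. Therefore $V_x(r)\ge\varepsilon_0\,\omega_n r^n$ for every $x\in\Omega$ and every $0<r\le1$, and since $\varepsilon_0$ depends only on $n$, $A$ and $C$, this is exactly the measure density condition.

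The main obstacle I anticipate is the two-scale mismatch in $(\ast)$: one test function bounds $V_x(\rho)$ only in terms of $V_x(\rho/2)$, so no single application of the embedding produces $V_x(r)\gtrsim r^n$ directly, and the argument must exploit that $(\ast)$ self-improves along dyadic scales, with the cascade forced to terminate because $V_x(\rho)$ equals $\omega_n\rho^n$ exactly below $\dist(x,\partial\Omega)$. The remaining points are more routine: checking that $(\ast\ast)$ is legitimately available at every scale in the iteration (it is, because the relevant arguments are $1/V_x(\rho)$, monotone in $\rho$ and uniformly $\ge t_0$ once $V_x(r)$ is small) and that all constants stay uniform in $x$ and $r$; and deriving $(\ast\ast)$ from $I_A<n$ through Lemma \ref{musil-cianchi} and the definition of $A_n$, a standard computation of the kind carried out in \cite{Cia96,Cia00}.
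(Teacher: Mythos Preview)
Your argument is correct, and it shares with the paper's proof the key analytic ingredient: your inequality $(\ast\ast)$ is exactly the paper's Lemma~\ref{mainlemma} rewritten as $A^{-1}(t)\ge c_1 t^{1/n}\bar A_n^{-1}(t)$ for large $t$. (The paper derives it from condition (ii) of Lemma~\ref{musil-cianchi} via $\Phi_n$ and $D_n$; your sketch via condition (iii) and $H_n$ is equivalent, though condition (ii) makes the integral estimate more transparent.)

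Where you genuinely diverge is in the iteration. The paper does \emph{not} halve the radius: it chooses $\tilde R<R$ to be the smallest radius with $|B_{\tilde R}|=\tfrac12|B_R|$, uses a cutoff with $|\nabla u|\lesssim 1/(R-\tilde R)$, and from the embedding plus Lemma~\ref{mainlemma} obtains $R-\tilde R\lesssim |B_R|^{1/n}$. Iterating $R_0=R$, $R_{i+1}=\widetilde{R_i}$ gives $|B_{R_i}|=2^{-i}|B_R|$ and a telescoping sum $R=\sum_i(R_i-R_{i+1})\lesssim |B_R|^{1/n}\sum_i 2^{-i/n}$, which yields the measure density directly and constructively. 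Your route instead halves the radius, uses only the crude gradient bound $|\nabla u|\le 2/\rho$, and extracts a \emph{nonlinear} recursion $\phi(\rho/2)\le C_4\,\phi(\rho)^{1+1/n}$; you then close by a De~Giorgi-type contradiction, exploiting the super-exponential decay of $(C_4^n\phi(r))^{(1+1/n)^k}$ against the fact that $\phi(\rho)=1$ for $\rho<\dist(x,\partial\Omega)$. The paper's measure-halving gives a cleaner, contradiction-free proof with an explicit constant; your radius-halving avoids introducing the auxiliary radius $\tilde R$ and reduces everything to a standard self-improving inequality, at the price of an indirect argument. Both are valid and of comparable length.
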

\begin{remark}
The condition $\int_0 \tilde{A}(t)/t^{1+n'}\,dt<\infty$ does not seem to be so restrictive; $A$ can be modified, if necessary, near zero in such a way that $\int_0 \tilde{A}(t)/t^{1+n'}\,dt<\infty$ and it leaves the space $W^{1,A}(\Omega)$ unchanged whenever $\vert\Omega\vert<\infty.$ However, the condition $I_A<n$ seems to be an extra condition here and we do not know how to get rid of it. Also note that, the function $t^p\log^{\lambda}(e+t)$ satisfies the condition $I_A<n$ whenever $1\leq p\leq n.$ 
\end{remark}
\begin{proof}
For a fixed $x\in\Omega$ and for any $0<R\leq 1,$ denote $B_R=B(x,R)\cap\Omega.$ We take $\tilde{R}<R,$ the smallest real number such that
\begin{equation}\label{half_A_R}
\vert B_{\tilde{R}}\vert=\frac{1}{2}\vert B_R\vert.
\end{equation}

For a fixed $x\in\Omega,$ let $u(y):=\eta(y-x)$ be a function of $y\in\Omega$ where $\eta$ is a cut-off function satisfying:\\
1. $\eta: \mathbb{R}^n\rightarrow [0,1],$\\
2. $\spt \eta \subset B_R,$\\
3. $\eta\vert_{B_{\tilde{R}}}=1,$ and\\
4. $\vert\nabla \eta\vert\leq \tilde{c}/(R-\tilde{R})$ for some constant $\tilde{c}.$\\
Now observe that,
\begin{equation*}
\Vert u\Vert_{L^{\bar{A}_n}(\Omega)}\geq \Vert \chi_{B_{\tilde{R}}}\Vert_{L^{\bar{A}_n}(\Omega)}
\end{equation*}
and
\begin{eqnarray*}
\Vert u\Vert_{W^{1,A}(\Omega)} &=& \Vert u\Vert_{L^A(\Omega)}+\Vert \nabla u\Vert_{L^A(\Omega)}\\
&\leq & \Vert \chi_{B_R}\Vert_{L^A(\Omega)}+\frac{\tilde{c}}{R-\tilde{R}}\Vert \chi_{B_R\setminus B_{\tilde{R}}}\Vert_{L^A(\Omega)}\\
&\leq &\frac{2\max\{1,\tilde{c}\}}{R-\tilde{R}}\Vert\chi_{B_R}\Vert_{L^A(\Omega)}.
\end{eqnarray*}
Therefore, from the embedding
\begin{equation}\label{embedding}
\Vert u\Vert_{L^{\bar{A}_n(\Omega)}}\leq c_e\Vert u\Vert_{W^{1,A}(\Omega)},
\end{equation}
we obtain
\begin{equation*}
R-\tilde{R}\leq 2c_e\max\{1,\tilde{c}\}\frac{\Vert\chi_{B_R}\Vert_{L^A(\Omega)}}{\Vert \chi_{B_{\tilde{R}}}\Vert_{L^{\bar{A}_n}(\Omega)}}.
\end{equation*}
Then we use \eqref{chi_norm} and \eqref{half_A_R} to get
\begin{equation}\label{R-}
R-\tilde{R}\leq 2c_e\max\{1,\tilde{c}\}\frac{\bar{A}_n^{-1}\left(\frac{2}{\vert B_{R}\vert}\right)}{A^{-1}\left(\frac{1}{\vert B_R\vert}\right)},
\end{equation}
where $A^{-1}$ and $\bar{A_n}^{-1}$ are the right-continuous inverses of $A$ and $\bar{A}_n$ respectively.\\
Now, we need the following Lemma. The construction of the functions in the proof of the Lemma is due to \cite{Cia96}.
\begin{lemma}\label{mainlemma}
Let $A$ and $\bar{A}_n$ be the same as Theorem \ref{main}. 
Then constants $r_0$ and $c_0$ exist such that
\begin{equation}
\frac{\bar{A}_n^{-1}(2r)}{A^{-1}(r)}\leq c_0\left(\frac{1}{r}\right)^{1/n},
\end{equation}
for all $r\geq r_0.$
\end{lemma}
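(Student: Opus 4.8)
The plan is to estimate the two quantities $\bar{A}_n^{-1}(2r)$ and $A^{-1}(r)$ separately for large $r$ and combine them. Since $\bar{A}_n$ is equivalent to $A_n$ near infinity, it suffices, up to adjusting $c_0$ and $r_0$, to bound $A_n^{-1}(2r)/A^{-1}(r)$ from above by $c_0 r^{-1/n}$. First I would unwind the definitions: from \eqref{A_n}, $A_n(s)=\int_0^s \rho^{n'-1}\bigl(\Phi_n^{-1}(\rho^{n'})\bigr)^{n'}\,d\rho$, and using the elementary inequality $A_n(s)\ge \tfrac{1}{n'}s^{n'}\bigl(\Phi_n^{-1}(\tfrac{1}{2}s^{n'})\bigr)^{n'}$ (obtained by restricting the integral to $[s/2^{1/n'},s]$, say, and using monotonicity of $\Phi_n^{-1}$), one gets a lower bound for $A_n$, hence an upper bound for $A_n^{-1}$, in terms of $\Phi_n^{-1}$. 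The upshot is a bound of the form $A_n^{-1}(t)\le c\,t^{1/n'}/\Phi_n^{-1}(c\,t)$ for large $t$, modulo careful tracking of the power substitutions $r\mapsto r^{n'}$.

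Next I would relate $\Phi_n^{-1}$ back to $A$. By \eqref{Phi_n}, $\Phi_n(s)=\int_0^s \tilde{A}(t)/t^{1+n'}\,dt$, which is finite by hypothesis; the key is to compare $\Phi_n(s)$ with $\tilde{A}(s)/s^{n'}$. This is exactly where the hypothesis $I_A<n$ enters, via Lemma \ref{musil-cianchi}: condition $(ii)$ with $\alpha=1/n'$ (so $1/(1-\alpha)=n'$… one must check the index bookkeeping — $\alpha<1$ corresponds to $I_A<1/\alpha$, and $I_A<n$ gives the needed range) yields $\int_1^t \tilde{A}(s)/s^{n'+1}\,ds\lesssim \tilde{A}(kt)/t^{n'}$ near infinity, i.e. $\Phi_n(t)\lesssim \tilde{A}(kt)/t^{n'}$ up to the harmless contribution of $\int_0^1$. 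Inverting this comparison gives a lower bound $\Phi_n^{-1}(u)\gtrsim$ (something expressed through $\tilde{A}^{-1}$). Then, using the standard relation \eqref{relation}, $\tilde{A}^{-1}(r)\aeq r/A^{-1}(r)$ up to a factor $2$, I can express everything back in terms of $A^{-1}(r)$. Assembling these estimates, the factors of $\tilde{A}^{-1}$ and $A^{-1}$ should combine so that the $A$-dependent parts cancel, leaving precisely the power $r^{-1/n}$ with a universal constant.

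The main obstacle I anticipate is twofold. First, the bookkeeping of the exponents: the definition of $A_n$ involves the substitution $r\mapsto r^{n'}$ applied to $\Phi_n^{-1}$, and $\Phi_n$ itself involves the exponent $1+n'$; keeping $n'=n/(n-1)$, $n'-1=1/(n-1)$, and the target exponent $1/n$ consistent through all inversions requires care, and it is easy to be off by a power. Second, and more seriously, is verifying that the comparison $\Phi_n(t)\aeq \tilde{A}(t)/t^{n'}$ (or at least the one-sided bound needed) really does follow from $I_A<n$ via Lemma \ref{musil-cianchi}; one direction ($\Phi_n(t)\gtrsim c\,\tilde{A}(t)/t^{n'}$ for a suitable constant) is automatic from monotonicity of $\tilde{A}$, but the reverse inequality is exactly condition $(ii)$, so the hypothesis $I_A<n$ is used precisely here and nowhere can it be dispensed with — this matches the remark after Theorem \ref{main} that the authors do not know how to remove $I_A<n$. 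Once these two points are handled, the inequality $\bar{A}_n^{-1}(2r)/A^{-1}(r)\le c_0 r^{-1/n}$ for $r\ge r_0$ follows by direct substitution, and the constants $r_0,c_0$ are determined by the "near infinity" thresholds in Lemma \ref{musil-cianchi} and the equivalence of $\bar{A}_n$ with $A_n$.
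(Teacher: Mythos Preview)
Your proposal is correct and essentially mirrors the paper's proof: reduce from $\bar{A}_n$ to $A_n$, bound $A_n^{-1}$ in terms of $\Phi_n^{-1}$, invoke Lemma~\ref{musil-cianchi}(ii) (with $\alpha=1/n$, not $1/n'$, so that $1/(1-\alpha)=n'$ --- the very bookkeeping you flagged) to obtain $\Phi_n(t)\lesssim \tilde{A}(kt)/t^{n'}$ near infinity, invert, and pass from $\tilde{A}^{-1}$ to $A^{-1}$ via \eqref{relation}. The only cosmetic difference is that the paper packages the inversion step through the auxiliary functions $C_n(s)=s^{n'}\bigl(\Phi_n^{-1}(s^{n'})\bigr)^{n'}$ and $D_n(s)=s^{n'}\Phi_n(s)$, which yield the clean identity $C_n^{-1}(r)=r^{1/n'}/D_n^{-1}(r)$ in place of your direct integral lower bound for $A_n$.
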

\begin{proof}
Let $A_n$ and $\Phi_n$ be the Young functions defined by \eqref{A_n} and \eqref{Phi_n} respectively. Set
\begin{equation}
C_n(s)=s^{n'}(\Phi_n^{-1}(s^{n'}))^{n'}.
\end{equation}
Then, we have by \eqref{compare}, for $s>0,$
\begin{equation}
C_n(\frac{s}{2})\leq A_n(s)\leq C_n(s)
\end{equation}
and hence, for $r>0,$
\begin{equation}\label{ACinverse}
\frac{1}{2}A_n^{-1}(r)\leq C_n^{-1}(r)\leq A_n^{-1}(r).
\end{equation}
Moreover, on setting, for $s>0,$
\begin{equation}
D_n(s)=s^{n'}\Phi_n(s),
\end{equation}
we get, for $r>0,$
\begin{equation}\label{Cinverse}
C_n^{-1}(r)=\frac{r^{1/n'}}{D_n^{-1}(r)}.
\end{equation}
Here $C_n^{-1}$ and $D_n^{-1}$ are the right-continuous inverses of $C_n$ and $D_n$ respectively. Since $I_A<n,$ we have from Lemma \ref{musil-cianchi} that constants $s_0$ and $k_0$ exist such that
\begin{equation}
\int_1^s \frac{\tilde{A}(t)}{t^{1+n'}}\,dt\leq \frac{\tilde{A}(k_0s)}{s^{n'}},\quad\text{for}\,\,s\geq s_0.
\end{equation}
Let us choose $s_0>2.$ Then $\int_1^s \frac{\tilde{A}(t)}{t^{1+n'}}\,dt$ and $\int_0^s \frac{\tilde{A}(t)}{t^{1+n'}}\,dt$ are comparable. Consequently, a constant $c_1>0$ exists such that
\begin{equation}
D_n(s)\leq c_1\tilde{A}(k_0s), \quad\text{for}\,\,s\geq s_0.
\end{equation}
Hence there exist constants $r_0$ and $c_2>1$ such that, for $r\geq r_0,$
\begin{equation}
\tilde{A}^{-1}\left(\frac{r}{c_2}\right)\leq k_0 D_n^{-1}(r),
\end{equation}
which yields, after using \eqref{relation} and the increasing property of both $A^{-1}$ and $\tilde{A}^{-1},$ for $r\geq r_0/2$,
\begin{equation}\label{Ainverse}
\frac{r}{c_2A^{-1}(r)}\leq \frac{r}{c_2A^{-1}(r/c_2)}\leq \tilde{A}^{-1}\left(\frac{r}{c_2}\right)\leq \tilde{A}^{-1}\left(\frac{2r}{c_2}\right) \leq k_0 D_n^{-1}(2r).
\end{equation}
Therefore, following \eqref{ACinverse}, \eqref{Cinverse} and \eqref{Ainverse}, we obtain, for $r\geq r_0/2$,
\begin{equation}
\frac{A_n^{-1}(2r)}{A^{-1}(r)}\leq 2^{1+1/n'}k_0c_2\left(\frac{1}{r}\right)^{1/n}.
\end{equation}
Now, since $A_n$ and $\bar{A}_n$ are equivalent near infinity, we get the desired estimate by choosing $r_0$ big enough.
\end{proof}
To finish the proof of Theorem \ref{main}, let us fix $r_0$ from the above Lemma. It is enough to consider the case when $\vert B_R\vert\leq \frac{1}{r_0},$ otherwise $\vert B_R\vert>\frac{1}{r_0}\geq \frac{R^n}{r_0}$ and there is nothing to prove. Then the above lemma yields
\begin{equation}
\frac{\bar{A}_n^{-1}\left(\frac{2}{\vert B_{R}\vert}\right)}{A^{-1}\left(\frac{1}{\vert B_R\vert}\right)}\leq c_0\vert B_R\vert^{1/n}
\end{equation}
and hence \eqref{R-} becomes
\begin{equation}\label{R-final}
R-\tilde{R}\leq c_3\vert B_R\vert^{1/n},
\end{equation} 
where $c_3=2c_ec_0\max\{1,\tilde{c}\}.$\\

To conclude the proof, construct a sequence $R_i$ by setting $R_0:=R$ and $R_{i+1}:=\tilde{R}_i$ inductively for $i\geq 0.$ It follows that
\begin{equation}
\vert B_{R_{i+1}}\vert=\frac{1}{2}\vert B_{R_i}\vert
\end{equation}
with $\lim_{i\rightarrow\infty}R_i=0.$\\

By applying the above method, we observe that
\begin{equation}
R_i-R_{i+1}\leq c_3\vert B_{R_i}\vert^{1/n}=c_32^{-i/n}\vert B_R\vert^{1/n}
\end{equation}
and deduce
\begin{equation}
R=\sum_{i=0}^{\infty}(R_i-R_{i+1})\leq \frac{c_3}{1-2^{-1/n}}\vert B_R\vert^{1/n},
\end{equation}
as desired.
\end{proof}
\noindent We get a similar result for higher-order Orlicz-Sobolev embedding as well:
\begin{theorem}\label{main-higher}
Let $1\leq m<n.$ Let $A$ be any Young function satisfying \eqref{finite} and $A{\frac{n}{m}}$ be the Young function defined by \eqref{A_nm}.
Let $\Omega$ be any open set in $\mathbb{R}^n$ and $W^{m,A}(\Omega)\hookrightarrow L^{A_{\frac{n}{m}}}(\Omega).$ If $I_{A}<\frac{n}{m},$ then $\Omega$ satisfies the measure density condition.
\end{theorem}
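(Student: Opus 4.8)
The plan is to run the argument of Theorem~\ref{main} with two modifications: an $m$-th order cut-off in place of the first-order one, and a $\tfrac nm$-analogue of Lemma~\ref{mainlemma}. Fix $x\in\Omega$ and $0<R\le 1$, write $B_R=B(x,R)\cap\Omega$, and let $\tilde R<R$ be the smallest number with $|B_{\tilde R}|=\tfrac12|B_R|$. Choose a mollified bump $u\colon\mathbb{R}^n\to[0,1]$ with $\spt u\subset B(x,R)$, $u\equiv 1$ on $B(x,\tilde R)$, and $|\nabla^\alpha u|\le\tilde c_m(R-\tilde R)^{-|\alpha|}$ for $|\alpha|\le m$; then $u\ge\chi_{B_{\tilde R}}$ and $u\le\chi_{B_R}$ on $\Omega$, so $\|u\|_{L^{A_{\frac nm}}(\Omega)}\ge\|\chi_{B_{\tilde R}}\|_{L^{A_{\frac nm}}(\Omega)}$, while, since $R-\tilde R<R\le 1$ gives $(R-\tilde R)^{-|\alpha|}\le(R-\tilde R)^{-m}$ for $|\alpha|\le m$,
\begin{equation*}
\|u\|_{W^{m,A}(\Omega)}=\sum_{|\alpha|\le m}\|\nabla^\alpha u\|_{L^A(\Omega)}\le\frac{C_m}{(R-\tilde R)^m}\,\|\chi_{B_R}\|_{L^A(\Omega)}.
\end{equation*}
Inserting these bounds into the assumed continuous embedding and using \eqref{chi_norm} together with $|B_{\tilde R}|=\tfrac12|B_R|$ yields
\begin{equation*}
(R-\tilde R)^m\le C\,\frac{A_{\frac nm}^{-1}(2/|B_R|)}{A^{-1}(1/|B_R|)}.
\end{equation*}

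The crux is the substitute for Lemma~\ref{mainlemma}: there exist $r_0,c_0>0$ with $A_{\frac nm}^{-1}(2r)/A^{-1}(r)\le c_0\,r^{-m/n}$ for all $r\ge r_0$. Since $A_{\frac nm}^{-1}=H_{\frac nm}\circ A^{-1}$ with $H_{\frac nm}(x)=\big(\int_0^x(t/A(t))^{m/(n-m)}\,dt\big)^{(n-m)/n}$, and since $A^{-1}$ is concave (so $A^{-1}(2r)\le 2A^{-1}(r)$) while $A(2A^{-1}(r))\ge r$, this follows once one has the growth estimate
\begin{equation*}
\int_0^x\Big(\frac{t}{A(t)}\Big)^{\frac{m}{n-m}}\,dt\ \le\ C\,x\Big(\frac{x}{A(x)}\Big)^{\frac{m}{n-m}}\qquad\text{for all large }x
\end{equation*}
(apply it at $x=2A^{-1}(r)$ and simplify the exponents). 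To prove this estimate I would invoke Lemma~\ref{musil-cianchi}(iii) with $\alpha=m/n$, admissible since $1\le m<n$ and $I_A<n/m$: there are $\sigma>1$ and $c\in(0,1)$ with $A(\sigma t)\le c\,\sigma^{n/m}A(t)$ near infinity. Writing $g(t)=(t/A(t))^{m/(n-m)}$, an elementary manipulation shows this is equivalent to $\sigma t\,g(\sigma t)\ge c^{-m/(n-m)}\,t\,g(t)$ near infinity; that is, $t\mapsto t\,g(t)$ grows geometrically along $\sigma$-adic progressions, whereas $g$ is nonincreasing. Decomposing $(0,x]$ into the annuli $(\sigma^{-k-1}x,\sigma^{-k}x]$, bounding $g$ on each by its value at the left endpoint and $t\,g(t)$ from above by a geometric multiple of $x\,g(x)$, and summing the resulting geometric series controls the part of the integral lying in the range where (iii) applies; the remaining piece near $0$ is a fixed finite constant (finiteness being exactly \eqref{finite}), and this constant is $\le C\,x\,g(x)$ because iterating (iii) gives $A(x)\lesssim x^{n/m}$ near infinity, which keeps $x\,g(x)$ bounded away from $0$.

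With the lemma in hand, it suffices to treat $|B_R|\le 1/r_0$, since otherwise $|B_R|>1/r_0\ge R^n/r_0$ and there is nothing to prove. Then the two displays above combine to $(R-\tilde R)^m\le c_3|B_R|^{m/n}$, i.e.\ $R-\tilde R\le c_3^{1/m}|B_R|^{1/n}$, and the proof concludes exactly as that of Theorem~\ref{main}: the sequence $R_0=R$, $R_{i+1}=\tilde R_i$ satisfies $|B_{R_i}|=2^{-i}|B_R|$ and $R_i\to 0$, so
\begin{equation*}
R=\sum_{i=0}^\infty(R_i-R_{i+1})\le c_3^{1/m}|B_R|^{1/n}\sum_{i=0}^\infty 2^{-i/n}=\frac{c_3^{1/m}}{1-2^{-1/n}}\,|B_R|^{1/n},
\end{equation*}
which gives $|B_R|\ge c\,R^n$, the measure density condition.

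I expect the main obstacle to be the growth estimate for $\int_0^x(t/A(t))^{m/(n-m)}\,dt$: one has to extract the geometric near-monotonicity of $t\,g(t)$ with precisely the right exponent from the Boyd-index hypothesis $I_A<n/m$, and then carefully splice the estimates in the large-$t$ regime, where Lemma~\ref{musil-cianchi} applies, with the behaviour near $0$, which is governed only by \eqref{finite}.
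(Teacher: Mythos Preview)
Your overall architecture---the $m$-th order cut-off, the inequality $(R-\tilde R)^m\le C\,A_{n/m}^{-1}(2/|B_R|)/A^{-1}(1/|B_R|)$, the key lemma $A_{n/m}^{-1}(2r)/A^{-1}(r)\le c_0 r^{-m/n}$, and the telescoping sequence $R_i$---coincides with the paper's. Where you diverge is in the proof of the key lemma. The paper passes to the conjugate side: it sets $\Phi_{n/m}(s)=\int_0^s\tilde A(t)\,t^{-1-n/(n-m)}\,dt$ and builds auxiliary Young functions $C_{n/m}$, $E_{n/m}$, appeals to \cite[Lemma~2]{Cia00} to show $E_{n/m}$ is globally equivalent to $A_{n/m}$, and then invokes condition~(ii) of Lemma~\ref{musil-cianchi} (the integral growth of $\tilde A$) exactly as in Lemma~\ref{mainlemma}. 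You instead stay on the primal side, using $A_{n/m}^{-1}=H_{n/m}\circ A^{-1}$ and condition~(iii) of Lemma~\ref{musil-cianchi} to get geometric growth of $t\,(t/A(t))^{m/(n-m)}$ along $\sigma$-adic scales, from which the integral bound follows by dyadic decomposition plus the observation that $x(x/A(x))^{m/(n-m)}$ stays bounded below (since iterating~(iii) yields $A(x)\lesssim x^{n/m}$). Your argument is correct and more self-contained---no conjugate $\tilde A$, no auxiliary functions $C_{n/m},E_{n/m}$, no external equivalence lemma---while the paper's approach has the virtue of reusing the machinery of Lemma~\ref{mainlemma} verbatim with $n'$ replaced by $n/(n-m)$.
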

\begin{proof}
The proof is very similar to that of the previous theorem, we will present only the main steps. For a fixed $x\in\Omega,$ let $R,\tilde{R}$ be the same as before and let $u(y):=\eta(y-x)$ be a function of $y\in\Omega$ where $\eta$ is a cut-off function satisfying:\\
1. $\eta: \mathbb{R}^n\rightarrow [0,1],$\\
2. $\spt \eta \subset B_R,$\\
3. $\eta\vert_{B_{\tilde{R}}}=1,$ and\\
4. $\vert\nabla \eta\vert\leq \tilde{c}/(R-\tilde{R})^{\vert \alpha\vert}$ for some constant $\tilde{c}$ and for all multi-indices $\alpha$ with $\vert\alpha\vert\leq m.$\\
The existence of such function is guaranteed by \cite[Lemma 10]{HKT08b}. Using this function, the embedding yields some positive constant $c_4$ such that
\begin{equation}\label{R-higher}
(R-\tilde{R})^m\leq c_4\frac{A_{\frac{n}{m}}^{-1}\left(\frac{2}{\vert B_{R}\vert}\right)}{A^{-1}\left(\frac{1}{\vert B_R\vert}\right)}.
\end{equation}  
The proof finishes with the same procedure as before after applying the following lemma.
\end{proof}
\begin{lemma}
Let $A$ and $A_{\frac{n}{m}}$ be the same as Theorem \ref{main-higher}. 
Then constants $r_0$ and $c_0$ exist such that
\begin{equation}
\frac{A_{\frac{n}{m}}^{-1}(2r)}{A^{-1}(r)}\leq c_0\left(\frac{1}{r}\right)^{m/n},
\end{equation}
for all $r\geq r_0.$
\end{lemma}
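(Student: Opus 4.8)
The plan is to mirror the structure of Lemma~\ref{mainlemma}, replacing the exponent $1/n$ by $m/n$ throughout. Recall that the relevant Young function is $A_{\frac{n}{m}}=A\circ H_{\frac{n}{m}}^{-1}$, so that $A_{\frac{n}{m}}^{-1}=H_{\frac{n}{m}}\circ A^{-1}$. Hence, for $r>0$,
\begin{equation*}
\frac{A_{\frac{n}{m}}^{-1}(2r)}{A^{-1}(r)}=\frac{H_{\frac{n}{m}}(A^{-1}(2r))}{A^{-1}(r)},
\end{equation*}
and using that $A^{-1}(2r)\leq 2A^{-1}(r)$ (convexity of $A$) together with the monotonicity of $H_{\frac{n}{m}}$, it suffices to bound $H_{\frac{n}{m}}(A^{-1}(r))/A^{-1}(r)$ from above by a constant times $r^{-m/n}$ for large $r$. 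Writing $\rho=A^{-1}(r)$, equivalently $r=A(\rho)$, the task becomes: show that $H_{\frac{n}{m}}(\rho)/\rho\leq c_0\,A(\rho)^{-m/n}$ for all sufficiently large $\rho$, i.e.
\begin{equation*}
\int_0^\rho\left(\frac{t}{A(t)}\right)^{\frac{m}{n-m}}dt\leq c_0^{\frac{n}{n-m}}\,\rho^{\frac{n}{n-m}}A(\rho)^{-\frac{m}{n-m}}.
\end{equation*}

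The key step is the condition $I_A<n/m$. By Lemma~\ref{musil-cianchi} applied with $\alpha=m/n$, there are $\sigma>1$ and $c\in(0,1)$ with $A(\sigma t)\leq c\sigma^{n/m}A(t)$ near infinity; equivalently, the function $t\mapsto A(t)/t^{n/m}$ decays geometrically along the lacunary sequence $\sigma^k$, so $(t/A(t))^{m/n}$ grows geometrically and in particular $(t/A(t))^{m/(n-m)}$ is, up to a constant, ``dominated by its value at the endpoint'' when integrated: a dyadic-type decomposition of $(0,\rho)$ into intervals $[\sigma^{-k-1}\rho,\sigma^{-k}\rho]$ shows $\int_0^\rho (t/A(t))^{m/(n-m)}\,dt\lesssim \rho\,(\rho/A(\rho))^{m/(n-m)}$, which is exactly the bound above. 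Alternatively, and perhaps more in the spirit of Lemma~\ref{mainlemma}, one can use that the conditions \eqref{finite} and $\int_0\tilde A(t)/t^{1+n'}\,dt<\infty$ are equivalent (cited from \cite{Cia00}) to reduce directly to the computation already carried out there, after checking that $H_{\frac{n}{m}}$ and the function $C_n$ (suitably reinterpreted with the exponent $m$) have comparable inverses; this is essentially the content of \cite[Lemma~2]{Cia00} combined with the integral estimate from Lemma~\ref{musil-cianchi}.

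Concretely I would proceed as follows. First, record the identity $A_{\frac{n}{m}}^{-1}=H_{\frac{n}{m}}\circ A^{-1}$ and the reduction above to an estimate on $H_{\frac{n}{m}}(\rho)/\rho$. Second, invoke $I_A<n/m$ via part $(iii)$ of Lemma~\ref{musil-cianchi} with $\alpha=m/n$ to obtain the geometric decay of $A(t)/t^{n/m}$. Third, carry out the lacunary decomposition of the integral defining $H_{\frac{n}{m}}(\rho)$ to deduce $H_{\frac{n}{m}}(\rho)\leq c_0\,\rho\,A(\rho)^{-m/n}$ for $\rho$ large, with $c_0$ depending only on $n,m,\sigma,c$. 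Fourth, substitute back $\rho=A^{-1}(r)$, absorb the factor coming from $A^{-1}(2r)\leq 2A^{-1}(r)$, and choose $r_0$ large enough that all ``near infinity'' statements are in force. The main obstacle is purely technical: one must be careful that $H_{\frac{n}{m}}$ is finite (which is exactly what \eqref{finite} guarantees) and that the generalized left-continuous inverses behave well under composition, so that \eqref{chi_norm}-type manipulations and the inequality $A^{-1}(2r)\le 2A^{-1}(r)$ are legitimate; there is no conceptual difficulty beyond translating the $1/n$ argument of Lemma~\ref{mainlemma} to the $m/n$ setting.
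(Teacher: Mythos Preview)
Your argument is correct, but it follows a genuinely different route from the paper's. The paper does \emph{not} exploit the identity $A_{\frac{n}{m}}^{-1}=H_{\frac{n}{m}}\circ A^{-1}$ directly. Instead it passes to the conjugate side: it introduces $\Phi_{\frac{n}{m}}(s)=\int_0^s \tilde A(t)\,t^{-1-n/(n-m)}\,dt$, builds from it an auxiliary Young function $E_{\frac{n}{m}}$ (via $C_{\frac{n}{m}}(s)=s^{n/(n-m)}(\Phi_{\frac{n}{m}}^{-1}(s^{n/(n-m)}))^{n/(n-m)}$ and $E_{\frac{n}{m}}(s)=\int_0^s C_{\frac{n}{m}}(t)/t\,dt$), and then invokes \cite[Lemma~2]{Cia00} to obtain the global equivalence $E_{\frac{n}{m}}(c_1 s)\le A_{\frac{n}{m}}(s)\le E_{\frac{n}{m}}(c_2 s)$. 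After that, the estimate $E_{\frac{n}{m}}^{-1}(2r)/A^{-1}(r)\le c_0 r^{-m/n}$ is obtained by literally repeating the computation of Lemma~\ref{mainlemma} with $n'$ replaced by $n/(n-m)$, using part~$(ii)$ of Lemma~\ref{musil-cianchi} (the integral inequality for $\tilde A$) rather than part~$(iii)$.

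Your approach is more self-contained: by working on the primal side you avoid the detour through $\tilde A$ and the external citation, and the lacunary decomposition driven by part~$(iii)$ of Lemma~\ref{musil-cianchi} makes the role of the Boyd condition $I_A<n/m$ completely transparent. The paper's approach, on the other hand, buys brevity: once the equivalence $A_{\frac{n}{m}}\sim E_{\frac{n}{m}}$ is quoted, the remaining work is a verbatim copy of Lemma~\ref{mainlemma}, so almost nothing new has to be checked. Your ``alternative'' paragraph is in fact exactly what the paper does; the one minor care point in your main route is to handle the tail $\int_0^{t_0}(t/A(t))^{m/(n-m)}\,dt$ (finite by \eqref{finite}) and the factor $2$ in $H_{\frac{n}{m}}(2\rho)$ explicitly, both of which are easily absorbed since the integrand is decreasing.
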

\begin{proof}
For $s\geq 0,$ define
\begin{equation*}
\Phi_{\frac{n}{m}}(s)=\int_0^s\frac{\tilde{A}(t)}{t^{1+n/(n-m)}}\,dt\quad\text{and}\quad E_{\frac{n}{m}}(s)=\int_0^s\frac{C_{\frac{n}{m}}(t)}{t}\,dt,
\end{equation*}
where $C_{\frac{n}{m}}(s)=s^{\frac{n}{n-m}}\left(\Phi_{\frac{n}{m}}^{-1}\left(s^{\frac{n}{n-m}}\right)\right)^{\frac{n}{n-m}}.$ Then, by \cite[Lemma 2]{Cia00} we know that the condition \eqref{finite} is equivalent to $\Phi_{\frac{n}{m}}(s)<\infty$ and two constants $c_1=c_1(\frac{n}{m})$ and $c_2=c_2(\frac{n}{m})$ exist such that
\begin{equation*}
E_{\frac{n}{m}}(c_1s)\leq A_{\frac{n}{m}}(s)\leq E_{\frac{n}{m}}(c_2s), \quad\text{for}\,\, s\geq 0.
\end{equation*}
Therefore, for $r\geq 0,$
\begin{equation}\label{onehand}
c_1A_{\frac{n}{m}}^{-1}(r)\leq E_{\frac{n}{m}}^{-1}(r)\leq c_2A_{\frac{n}{m}}^{-1}(r).
\end{equation}
On the other hand, the same technique as in Lemma \ref{mainlemma} gives us the existence of two constants $c_0$ and $r_0$ such that
\begin{equation}\label{otherhand}
\frac{E_{\frac{n}{m}}^{-1}(2r)}{A^{-1}(r)}\leq c_0\left(\frac{1}{r}\right)^{m/n},\quad\text{for}\,\, r\geq r_0.
\end{equation}
Combining the last two inequalities we obtain the required result.
\end{proof}
\def\bibname{References}
\bibliography{measure_density_orlicz}
\bibliographystyle{plain}
\end{document}